\documentclass[11pt]{amsart}
\usepackage{amsmath}
\usepackage{amssymb}
\usepackage{amsthm}
\usepackage{latexsym}

\def\NZQ{\mathbb}               
\def\NN{{\NZQ N}}

\def\F2{{\NZQ F}_2}


%
%

%
%

%
\def\opn#1#2{\def#1{\operatorname{#2}}} 
%
\opn\chara{char} \opn\length{\ell} \opn\pd{pd} \opn\rk{rk}
\opn\projdim{proj\,dim} \opn\injdim{inj\,dim} \opn\rank{rank}
\opn\depth{depth} \opn\codepth{codepth} \opn\grade{grade}
\opn\height{height} \opn\embdim{emb\,dim} \opn\codim{codim}

\opn\Tr{Tr} \opn\bigrank{big\,rank}
\opn\superheight{superheight}\opn\lcm{lcm}
\opn\trdeg{tr\,deg}%
\opn\reg{reg} \opn\lreg{lreg} \opn\skel{skel}
\opn\Gr{Gr}
\opn\ann{ann}
\opn\sign{sign}
\opn\del{del}

%
\opn\div{div} \opn\Div{Div} \opn\cl{cl} \opn\Cl{Cl}
%
%
\opn\Spec{Spec} \opn\Supp{Supp} \opn\supp{supp} \opn\Sing{Sing}
\opn\Ass{Ass}\opn\fdepth{fdepth}
%
%
\opn\Ann{Ann} \opn\Rad{Rad} \opn\Soc{Soc}
%
%
\opn\Sym{Sym} \opn\Ker{Ker} \opn\Coker{Coker} \opn\Im{Im}
\opn\Hom{Hom} \opn\Tor{Tor} \opn\Ext{Ext} \opn\End{End}
\opn\Aut{Aut} \opn\id{id} \opn\ini{in} \opn\tr{tr}

\opn\nat{nat}\opn\it{it}
\opn\pff{proof}
\opn\Pf{proof} \opn\GL{GL} \opn\SL{SL} \opn\mod{mod} \opn\ord{ord}
%
%
\opn\aff{aff} \opn\con{conv} \opn\relint{relint} \opn\st{st}
\opn\lk{lk} \opn\cn{cn} \opn\core{core} \opn\vol{vol}
\opn\link{link} \opn\star{star} \opn\skel{skel} \opn\indeg{indeg}
\opn\Ass{Ass} \opn\Min{Min} \opn\sdepth{sdepth} \opn\depth{depth}
\opn\gr{gr}

%
%

\def\pot#1#2{#1[\kern-0.28ex[#2]\kern-0.28ex]}

%
%
\opn\dirlim{\underrightarrow{\lim}}
\opn\inivlim{\underleftarrow{\lim}}
%
%
%

%
%

\def\Implies{\ifmmode\Longrightarrow \else
     \unskip${}\Longrightarrow{}$\ignorespaces\fi}
\def\implies{\ifmmode\Rightarrow \else
     \unskip${}\Rightarrow{}$\ignorespaces\fi}
\def\iff{\ifmmode\Longleftrightarrow \else
     \unskip${}\Longleftrightarrow{}$\ignorespaces\fi}

\let\:=\colon
\opn\d{d}
\newtheorem{Theorem}{Theorem}[section]
\newtheorem{Lemma}[Theorem]{Lemma}
\newtheorem{Corollary}[Theorem]{Corollary}
\newtheorem{Proposition}[Theorem]{Proposition}

\newtheorem{Example}[Theorem]{Example}

\newtheorem{Remark}[Theorem]{Remark}
%
%
\let\epsilon\varepsilon
\let\phi=\varphi
\let\kappa=\varkappa
%
%
\textwidth=15cm \textheight=22cm \topmargin=0.5cm
\oddsidemargin=0.5cm \evensidemargin=0.5cm \pagestyle{plain}
%
%
\def\qed{\ifhmode\textqed\fi
   \ifmmode\ifinner\quad\qedsymbol\else\dispqed\fi\fi}
\def\textqed{\unskip\nobreak\penalty50
    \hskip2em\hbox{}\nobreak\hfil\qedsymbol
    \parfillskip=0pt \finalhyphendemerits=0}
\def\dispqed{\rlap{\qquad\qedsymbol}}

%
%
%
\opn\Gin{Gin}

\def\FF{{\mathcal F}}

\newcommand{\ri}{\mathrm{ri}}
\newcommand{\HP}{\mathrm{HP}}

\opn\inii{in} \opn\inim{inm} \opn\rate{rate}
\opn\H{H}

\numberwithin{equation}{section}

\textwidth=12.5cm
\textheight=19.5cm

\title{On the reduced Euler characteristic of independence complexes of circulant graphs}
\keywords{}
\usepackage{graphicx}
\usepackage{xcolor}
\usepackage{tikz}
\author{Giancarlo Rinaldo}
\address{Department of Mathematics\\
University of Trento\\
via Sommarive, 14\\
38123 Povo (Trento), Italy
}
\author{Francesco Romeo}

\begin{document}
\begin{abstract}
Let $G$ be the circulant graph $C_n(S)$ with $S\subseteq\{ 1,\ldots,\left \lfloor\frac{n}{2}\right \rfloor\}$. We study the reduced Euler characteristic $\tilde{\chi}$ of the independence complex $\Delta (G)$ for $n=p^k$ with $p$ prime and for $n=2p^k$ with $p$ odd prime, proving that in both cases $\tilde{\chi}$ does not vanish. We also give an example of circulant graph whose independence complex has $\tilde{\chi}$ equals to $0$, giving a negative answer to R. Hoshino.

\smallskip
\noindent \textbf{Keywords.} Circulant graph, Euler Characteristic, Simplicial Complex. 
\end{abstract}

\maketitle

\section*{Introduction}\label{sec:intro}
Let $G$ be a finite simple graph with vertex set $V(G)$ and edge set $E(G)$. A subset $C$ of $V(G)$ is a \emph{clique} of $G$ if  any two different vertices of $C$ are adjacent in $G$. A subset $A$ of $V(G)$ is called an \textit{independent set} of $G$ if no two vertices of $A$ are adjacent in $G$. The \textit{complement graph} of $G$, $\bar{G}$, is the graph with vertex set $V(G)$ and edge set $E(\bar{G})= \{\{u,v\} \mbox{ with } u,v \in V(G) \mid \{u,v\}\notin E(G)\}$.
In particular, a set is independent in $G$ if and only if it is a clique in the complement graph $\bar{G}$.

\noindent We also recall that a circulant graph is defined as follows. Let $S\subseteq \{ 1,2,\ldots,\left \lfloor\frac{n}{2}\right \rfloor\}$. The \textit{circulant graph} $G:=C_n(S)$ is a simple graph with  $V(G)=\mathbb{Z}_n=\{0,\ldots,n-1\}$ and $E(G) := \{ \{i, j\} \mid |j-i|_n \in S \}$ where $|k|_n=\min\{|k|,n-|k|\}$.

\noindent Recently many authors have studied some combinatorial and algebraic properties of circulant graphs (see \cite{Ho}, \cite{BH1}, \cite{BH0}, \cite{MTW}, \cite{EMT}, \cite{Ri}).  In particular, in \cite{Ho}, \cite{BH1}, \cite{BH0}, \cite{EMT}, a formula for the $f$-vector of the independence complex was showed for some nice classes of circulants, e.g. the $d$-th power cycle, $S=\{1,2,\ldots,d\}$, and its complement.  Moreover, Hoshino in \cite[p. 247]{Ho} focused on the Euler characteristic, an invariant that is associated to any simplicial complex (see \cite{BH2}). In particular, he conjectured, by our notation, that any independence complex associated to a non-empty circulant graph has reduced Euler characteristic always different from $0$.


\noindent We show that for particular $n$, a circulant graph $C_n(S)$ will support the conjecture, independent of the choice on $S$. To this aim, we exploit that each entry of the $f$-vector is a multiple of a divisor of $n$ (see Remark \ref{Div}).

\noindent In Section \ref{mainsection} we prove that the conjecture holds for $n=p^k$ for any prime $p$, and for $n=2p^{k}$ for any odd prime $p$. Moreover we disprove the conjecture providing a counterexample (see Example \ref{counterexample}).

As an application of our results, we focus our attention on two algebraic objects related to the independence complex of circulant graphs. We first consider the \emph{independence polynomial} (see \cite{Ho},\cite{BH0}), that is
\begin{equation}\label{indep}
I(G,x)= \sum\limits_{i=0}^{n} f_{i-1}x^{i},
\end{equation}
where $f_{i-1}$ are the entries of the $f$-vector of the independence complex of a graph $G$. In particular, $-1$ is a root of the independence polynomial if and only if the Euler characteristic of the independence complex vanishes. This happens in Example \ref{counterexample} and does not happen for all the cases studied in Theorems \ref{Th}, \ref{Th2}. 

\noindent The second one arises from commutative algebra (see also \cite{BH2}, \cite{MS}, \cite{Vi0}, \cite{St}). Let $R=K[x_{0},\ldots ,x_{n-1}]$ be the polynomial ring and $I(G)$ the edge ideal related to the graph $G$ (see \cite{Vi}), that is
\begin{equation}\label{edgid}
I(G)=(x_ix_j:\{i,j\}\in E(G)).
\end{equation}
In this case the non-vanishing of the reduced Euler charateristic gives us information about the regularity index of $R/I(G)$, namely the smallest integer such that the Hilbert function on $R/I(G)$ becomes a polynomial function, the so-called Hilbert polynomial (see Section \ref{sec:pre}, Remark \ref{remri}). Also in this case Theorems \ref{Th}, \ref{Th2} and Example \ref{counterexample} are relevant.

\section{Preliminaries}\label{sec:pre}
In this section we recall some concepts and notations on graphs and on simplicial complexes that we will use in the article. 

\noindent Set $V = \{x_1, \ldots, x_n\}$. A \textit{simplicial complex} $\Delta$ on the vertex set $V$ is a collection of subsets of $V$ such that: 1) $\{x_i\} \in \Delta$ for all $x_i \in V$; 2) $F \in \Delta$ and $G\subseteq F$ imply $G \in \Delta$.
An element $F \in \Delta$ is called a \textit{face} of $\Delta$. A maximal face of $\Delta$ with respect to inclusion is called a \textit{facet} of $\Delta$.

\noindent
The dimension of a face $F \in \Delta$ is $\dim F = |F|-1$, and the dimension of $\Delta$ is the maximum of the dimensions of all facets. Let $d-1$ be the dimension of $\Delta$ and let $f_i$ be the number of faces of $\Delta$ of dimension $i$ with the convention that $f_{-1}=1$. Then the $f$-vector of $\Delta$ is the $(d+1)$-tuple $f(\Delta)=(f_{-1},f_0,\ldots,f_{d-1})$. The $h$-vector of $\Delta$ is $h(\Delta)=(h_0,h_1,\ldots,h_d)$ with
\begin{equation} \label{hve}
 h_k=\sum_{i=0}^{k}(-1)^{k-i}\binom{d-i}{k-i} f_{i-1}. 
\end{equation}
The sum
\[
 \widetilde{\chi}(\Delta)=\sum_{i=0}^{d}(-1)^{i-1}f_{i-1}
\]
is called the reduced Euler characteristic of $\Delta$ and $h_d=(-1)^{d-1}\widetilde{\chi}(\Delta)$.

\noindent Given any simplicial complex $\Delta$ on $V$, we can associate a monomial ideal $I_\Delta$ in the polynomial ring $R$ as follows:
\[
 I_\Delta=(\{x_{j_1}x_{j_2}\cdots x_{j_r}: \{x_{j_1},x_{j_2},\ldots,x_{j_r}\}\notin \Delta\}).
\]
$R/I_\Delta$ is called \emph{Stanley-Reisner ring} and its Krull dimension is $d$. If $G$ is a graph, the \textit{independence complex} of $G$ is
\[
\Delta(G)=\{A\subset V(G): A \mbox{ is an independent set of }G\}. 
\]
The independence polynomial is associated to $\Delta (G)$ and by Equation \eqref{indep} it follows
\begin{equation}\label{charind}
\widetilde{\chi}(\Delta (G)) = -I(G,-1)
\end{equation}
We also remark that from the definition of Stanley-Reisner ring and by Equation \eqref{edgid}, it follows $R/I_{\Delta (G)}=R/I(G)$.

The \textit{clique complex} of a graph $G$ is the simplicial complex whose faces are the cliques of $G$.

\begin{Remark}
Let $G=C_{n}(S)$ be a circulant graph on $S \subseteq T:=\{1,2,\ldots,\left \lfloor\frac{n}{2}\right \rfloor\}$. We observe that the complement graph of $G$, namely $\bar{G}$, is a circulant graph on $\bar{S}:=T\setminus S$. Moreover the clique complex of $\bar{G}$ is the independence complex of $G$, $\Delta(G)$.
\end{Remark}

\noindent We also recall some basic facts about the regularity index (see \cite[Chapter 5]{Vi0}). Let $R$ be a standard graded ring and $I$ be a homogeneous ideal. The \textit{Hilbert function} $\H_{R/I} : \mathbb{N} \rightarrow \mathbb{N}$ is defined by 
\[
\H_{R/I} (k) := \dim_K (R/I)_k
\]
where $(R/I)_k$ is the $k$-degree component of the gradation of $R/I$ (see \cite[Section 2.2]{Vi}), while the Hilbert-Poincar\'e series of $R/I$ is
\[
\HP_{R/I} (t) := \sum_{k \in \NN} \H_{R/I}(k) t^k. 
\]
By the Hilbert-Serre theorem, the Hilbert-Poincar\'e series of $R/I$ is a rational function, in particular
\[
\HP_{R/I}(t) = \frac{h(t)}{(1-t)^n}.
\]
for some $h(t) \in \mathbb{Z}(t)$.
There exists a unique polynomial $P_{R/I}$ such that $\H_{R/I}(k) = P_{R/I}(k)$ for all $k\gg 0$. 
The minimum integer $k_0 \in \NN$ such that $\H_{R/I}(k) = P_{R/I}(k)$ for all $k \geq k_0$ is called \textit{regularity index} and we denote it by $\ri(R/I)$.\\
We end this section with the following
\begin{Remark}\label{remri}
 Let $R/I_\Delta$ be a Stanley-Reisner ring. Then 
 \[ 
\ri(R/I_\Delta)=
\left \{
  \begin{tabular}{cl}
  $0$ & if $h_d=0$  \\
  $1$ & if $h_d\neq 0$.  
  \end{tabular}
\right. 
\]
\end{Remark}
Related to the regularity index is the $a$-invariant (see Chapter 5 of \cite{Vi0}), namely the degree of $\HP_{R/I}(t)$ as a rational function, that gives further information about other algebraic invariants.

\section{Reduced Euler characteristic of the independence complex \\ of some circulants}\label{mainsection}
The goal of this section is to study the reduced Euler characteristic, $\widetilde{\chi}$, of the independence complex $\Delta(G)$ of circulant graphs, by proving bounds on the maximum clique number $\omega (\bar{G})$.
In \cite{Ri} the author proves that $\widetilde{\chi}(\Delta(G))\neq 0$ when $n$ is a prime number. We generalize the result  for $n=p^k$ for any prime $p$, and $n=2p^k$ for any odd prime $p$. For the sake of completeness, we give the following Lemma that has been stated in \cite[Lemma 1]{Ri}.

\begin{Lemma}\label{Div}
Let $G$ be a circulant graph on $n$ vertices. Let $f_{i-1}$ be the number of independent sets of cardinality $i$, and $f_{i-1,0}$ the number of them containing the vertex 0, then the following property holds
\[
i\cdot f_{i-1}=n \cdot f_{i-1,0} \\ \mbox{ with } \\ 0\leq i\leq d.
\]
\end{Lemma}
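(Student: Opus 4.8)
The plan is to exploit the vertex-transitivity of the circulant graph $G = C_n(S)$, which makes the cyclic group $\mathbb{Z}_n$ act on the vertex set $V(G) = \mathbb{Z}_n$ by translation $v \mapsto v + 1 \pmod n$, and hence act on the set of independent sets of each fixed cardinality $i$. The key observation is that this action preserves cardinality and sends independent sets to independent sets (since the edge relation $|j - i|_n \in S$ is translation-invariant), so $\mathbb{Z}_n$ acts on the collection $\mathcal{F}_{i-1}$ of independent sets of size $i$, a set of cardinality $f_{i-1}$.

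First I would set up a double-counting argument on the set of pairs $\{(A, v) : A \in \mathcal{F}_{i-1},\ v \in A\}$. Counting by $A$ first gives $i \cdot f_{i-1}$, since each independent set of cardinality $i$ contributes exactly $i$ such pairs. Counting by $v$ first gives $\sum_{v \in \mathbb{Z}_n} |\{A \in \mathcal{F}_{i-1} : v \in A\}|$. The remaining step is to show that for every vertex $v$, the number of independent sets of size $i$ containing $v$ equals the number containing $0$, i.e. equals $f_{i-1,0}$; summing over the $n$ vertices then yields $n \cdot f_{i-1,0}$, and equating the two counts gives $i \cdot f_{i-1} = n \cdot f_{i-1,0}$.

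For that remaining step I would use the translation automorphism $\tau^v$ (translation by $v$) of $G$: it is a bijection of $V(G)$ that maps independent sets to independent sets bijectively and preserves cardinality, and it carries the family $\{A : 0 \in A,\ |A| = i\}$ bijectively onto $\{A : v \in A,\ |A| = i\}$ (with inverse $\tau^{-v}$). Hence these two families have the same size $f_{i-1,0}$ for every $v$, which is exactly what is needed. The inequality constraint $0 \le i \le d$ is just the range in which $f_{i-1}$ can be nonzero (with $d-1 = \dim \Delta(G)$), so nothing extra is required there; for $i = 0$ both sides are $0$ and the identity holds trivially.

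The argument is essentially routine orbit-counting, so there is no serious obstacle; the only point that deserves care is confirming that translation is genuinely a graph automorphism of a circulant graph — this follows immediately because the adjacency condition $|j - i|_n \in S$ depends only on the difference $j - i$ modulo $n$, and translation by $v$ changes both coordinates by the same amount, leaving the difference unchanged.
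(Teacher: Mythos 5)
Your proof is correct and follows essentially the same route as the paper: the paper performs the identical double count of vertex--independent-set incidences, formalized via the incidence matrix (rows summing to $i$, columns summing to $f_{i-1,0}$), and likewise invokes the circulant structure to assert $f_{i-1,j}=f_{i-1,0}$, which you merely make more explicit via the translation automorphism.
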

\begin{proof}
Let us call $\FF_{i-1}\subset \Delta$ the set of faces of dimension $i-1$, that is 
\[
\FF_{i-1}=\{F_1,\ldots,F_{f_{i-1}}\}. 
\]
Let $f_{i-1,j}$ be number of faces in $\FF_{i-1}$  containing a given vertex $j=0,\ldots, n-1$. Since $G$ is circulant
$f_{i-1,j}=f_{i-1,0} \mbox{ for all }j\in \{0,\ldots,n-1\}$.
Let $A\in \F2^{f_{i-1} \times n}$, $A=(a_{jk})$, be the \emph{incidence matrix}, whose 
\[
a_{jk} = \begin{cases} 1 &\mbox{if the vertex } k-1 \in F_j \\ 0 &\mbox{otherwise.} \end{cases}
\]
We observe that each row has exactly $i$ $1$-entries. Hence summing the entries of the matrix we have $i f_{i-1}$. Moreover each column has exactly $f_{i-1,j}$ non-zero entries. That is $i\cdot f_{i-1} = n\cdot f_{i-1,0}$.
\end{proof}
\noindent A useful bound on the maximum clique number for non-complete circulant graphs is given by the following
\begin{Lemma}\label{CN}
Let $G=C_{n}(S)$ be a non-complete circulant graph. Then
$$\omega(G) \leq \Big\lfloor \dfrac{n}{2} \Big\rfloor .$$
\end{Lemma}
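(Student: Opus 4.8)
The goal is to bound the clique number $\omega(G)$ of a non-complete circulant graph $G = C_n(S)$ by $\lfloor n/2 \rfloor$. The plan is to argue by contradiction: suppose $C$ is a clique of $G$ with $|C| \geq \lfloor n/2 \rfloor + 1$, and derive that $G$ must be the complete graph $K_n$, contradicting the hypothesis. First I would observe that since $C$ is a clique, for any two distinct $i, j \in C$ we have $|i-j|_n \in S \subseteq T = \{1, \dots, \lfloor n/2\rfloor\}$. The key combinatorial fact is that if a subset $C \subseteq \mathbb{Z}_n$ is ``too large'', then the set of differences $\{|i-j|_n : i,j \in C,\ i \neq j\}$ is forced to be all of $T$, which would mean $S = T$, i.e. $G = K_n$.

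To make this precise, I would fix some $s \in T$ and show that if $|C| \geq \lfloor n/2 \rfloor + 1$ then there exist $i, j \in C$ with $|i - j|_n = s$; since $s$ was arbitrary this gives $S \supseteq T$, hence $S = T$ and $G = K_n$. For a fixed $s$, consider the partition of $\mathbb{Z}_n$ into the orbits of the translation $x \mapsto x + s$, equivalently the cosets structure; the pairs at ``distance $s$'' form a graph on $\mathbb{Z}_n$ which is a disjoint union of cycles (or, when $s = n/2$ and $n$ even, a perfect matching). A clique avoiding distance $s$ is an independent set in that auxiliary graph, and one computes its maximum size. When $\gcd(s,n) = 1$ the auxiliary graph is a single $n$-cycle, whose maximum independent set has size $\lfloor n/2 \rfloor < \lfloor n/2 \rfloor + 1$, giving the contradiction immediately; the remaining cases ($\gcd(s,n) = g > 1$, and the special case $n$ even, $s = n/2$) need a short separate count, but in each case the maximum independent set in the distance-$s$ graph has size at most $\lfloor n/2 \rfloor$.

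Alternatively, and perhaps more cleanly, I would avoid the case analysis by a direct pigeonhole argument: if $|C| \geq \lfloor n/2 \rfloor + 1$, then for the specific value $s = \lfloor n/2 \rfloor$ (the largest element of $T$) one can show some pair of $C$ realizes every distance in $T$ by a counting argument on $C$ and its translate $C + s \pmod n$, using $|C| + |C+s| = 2|C| > n$ when strict, or handling the boundary. The main obstacle I anticipate is the parity bookkeeping: the cases $n$ even versus $n$ odd behave slightly differently (when $n$ is even, $s = n/2$ is a genuine ``involution'' distance and the auxiliary graph is a matching rather than a union of cycles), so the inequality $|C| \leq \lfloor n/2\rfloor$ must be checked to be tight and correct in each parity. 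None of the steps is deep; the work is entirely in organizing the elementary counting so that the bound $\lfloor n/2 \rfloor$ comes out exactly rather than off by one.
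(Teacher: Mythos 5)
Your proposal is correct and, in its second (pigeonhole) form, is exactly the paper's proof: for each $r\in\{1,\ldots,\lfloor n/2\rfloor\}$ one compares the clique $C$ with its translate $C+r$ and uses $|C|+|C+r|>n$ to find $v,w\in C$ with $w=v+r$, forcing $r\in S$ and hence $G$ complete. The parity bookkeeping you worry about is unnecessary: $|C|\geq\lfloor n/2\rfloor+1$ gives $2|C|\geq n+1>n$ for both parities of $n$ and every $r\leq\lfloor n/2\rfloor$, so no case analysis (nor the auxiliary distance-$r$ cycle/matching graph of your first sketch) is needed.
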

\begin{proof}
Suppose that $\omega(G) > \lfloor \frac{n}{2} \rfloor$. So there exists a clique $F$ of cardinality $\lfloor \frac{n}{2} \rfloor +1$. Let $r \in \lbrace 1,2,\ldots ,\lfloor \frac{n}{2} \rfloor \rbrace$. For all $v \in F$, the set $\{v+r \ : \ v\in F \}$ contains $\lfloor \frac{n}{2} \rfloor + 1$ vertices so at least one of them belongs to $F$. Hence there exist $v,w \in F$, such that $w=v+r$.
Since $F$ is a clique $\{v,w=v+r\}\in E(G)$, that is $r \in S$. The latter works for any $r$, then we conclude 
$$S =  \Big\lbrace 1,2, \ldots, \Big\lfloor \frac{n}{2} \Big\rfloor \Big\rbrace,$$ so $G$ is complete, and this contradicts our assumption.
\end{proof}
\noindent Thanks to the Lemma \ref{CN}, we prove the following


\begin{Theorem}\label{Th}
Let $p$ be a prime and let $G$ be a non-empty circulant graph on $n=p^{k}$ vertices with $k>0$. Then
$\tilde{\chi}(\Delta(G)) \neq 0.$
\end{Theorem}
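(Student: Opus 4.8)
The key tool is Lemma \ref{Div}: for each $i$, $i \cdot f_{i-1} = n \cdot f_{i-1,0} = p^k \cdot f_{i-1,0}$. I would argue modulo $p$. Since $\widetilde{\chi}(\Delta(G)) = \sum_{i=0}^d (-1)^{i-1} f_{i-1}$, the idea is to show this sum is nonzero by showing it is $\not\equiv 0 \pmod{p^{\text{something}}}$, using that most of the $f_{i-1}$ are divisible by a high power of $p$ while a controlled few are not.

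First I would analyze divisibility of $f_{i-1}$ by $p$. From $i f_{i-1} = p^k f_{i-1,0}$, if $p \nmid i$ then $p^k \mid f_{i-1}$. More generally, writing $i = p^{a} m$ with $p \nmid m$ and $a < k$ (which holds as long as $i < p^k$, i.e.\ $i \le d$ since $G$ is non-empty so $d \le \lfloor n/2\rfloor < p^k$ — here Lemma \ref{CN} enters, guaranteeing $d = \omega(\bar G) \le \lfloor n/2 \rfloor$, hence $p \nmid$ no term is the whole thing and in fact $d < n$), we get $p^{k-a} \mid f_{i-1}$. So $f_{i-1}$ is divisible by $p$ unless $a = k-1$... wait, that's still divisible by $p$. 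Let me reconsider: $p^{k-a} \mid f_{i-1}$ and $k - a \ge 1$ always (since $a \le \log_p d < k$), so in fact $p \mid f_{i-1}$ for every $i$ with $1 \le i \le d$. The only term escaping is $i = 0$: $f_{-1} = 1$. Therefore
\[
\widetilde{\chi}(\Delta(G)) = -f_{-1} + \sum_{i=1}^d (-1)^{i-1} f_{i-1} \equiv -1 \pmod p,
\]
which is nonzero mod $p$, hence nonzero. That is the whole proof.

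The main point to get right is the bound ensuring $i < p^k$ for all $i$ in the range $1 \le i \le d$, so that the $p$-adic valuation of $i$ is strictly less than $k$ and $k - v_p(i) \ge 1$. Since $G$ is non-empty, $\bar G = C_n(\bar S)$ is non-complete (as $\bar S \ne T$), so by Lemma \ref{CN} applied to $\bar G$ we have $\omega(\bar G) \le \lfloor n/2 \rfloor$; since the faces of $\Delta(G)$ are the cliques of $\bar G$, we get $d = \dim\Delta(G) + 1 = \omega(\bar G) \le \lfloor n/2 \rfloor < n = p^k$. (When $p^k = 2$, i.e.\ $n=2$, $G$ non-empty forces $G = K_2$, $\Delta(G)$ consists of two points, $\widetilde\chi = 1 \ne 0$ — a trivial separate check, or just note $\lfloor 2/2\rfloor = 1 < 2$ still works.) So for $1 \le i \le d$ we have $i \le \lfloor n/2 \rfloor < p^k$, giving $v_p(i) < k$ and thus $p \mid f_{i-1}$ by Lemma \ref{Div}. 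No step here is a serious obstacle; the only thing to be careful about is the degenerate small cases and making sure Lemma \ref{CN} is applied to the complement graph, not to $G$ itself.
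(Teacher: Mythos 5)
Your proof is correct and is essentially the paper's own argument: Lemma \ref{Div} plus Lemma \ref{CN} applied to $\bar{G}$ give $i \le d \le \lfloor n/2\rfloor < p^{k}$, hence $p \mid f_{i-1}$ for $1 \le i \le d$, so $\widetilde{\chi}(\Delta(G)) \equiv -1 \pmod{p}$ and is nonzero. Your explicit tracking of the $p$-adic valuation of $i$ is just a more detailed phrasing of the same divisibility step.
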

\begin{proof}
By Lemma \ref{Div} it follows
$$i \cdot f_{i-1} = p^{k} \cdot f_{i-1,0} \\ \mbox{ with } \\ 0\leq i\leq d.$$
Since the graph $G$ is not empty, its complement graph $\bar{G}$ is not complete. Hence by Lemma \ref{CN}, we have that a maximum clique in $\bar{G}$ has cardinality $d~<~\frac{p^{k}}{2} $, namely $f_{i-1}$ is a non-zero multiple of $p$ for $1\leq i\leq d$.
Therefore
$$\widetilde{\chi}(\Delta(G))=\sum\limits_{i=1}^{d} (-1)^{i-1} f_{i-1} -1=pr -1$$
with $r \in \mathbb{Z}$. By the primality of $p$, $\widetilde{\chi}(\Delta(G))$ is always non-zero.
\end{proof}

Before stating the theorem on the case $n=2p^k$, we prove some properties that will be helpful.
\begin{Lemma}\label{2q}
Let $n=2q$ for an odd $q > 1$ and let $G=C_{n}(S)$ be a non-complete circulant graph. Then
$\omega (G) < q  \mbox{ if and only if }  \lbrace 2,4,\ldots, q-1 \rbrace \nsubseteq S.$
\end{Lemma}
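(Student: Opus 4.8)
The statement is an equivalence about when the clique number of a non-complete circulant $G = C_n(S)$ on $n = 2q$ vertices ($q$ odd, $q > 1$) drops below $q$; I want to show $\omega(G) < q$ iff $\{2, 4, \ldots, q-1\} \not\subseteq S$. The natural strategy is to prove the contrapositive in one direction and a direct construction in the other. First I would handle the easier implication: suppose $\{2, 4, \ldots, q-1\} \subseteq S$, and exhibit a clique of size $q$. The obvious candidate is the set of all even residues $E = \{0, 2, 4, \ldots, n-2\} \subseteq \NZQ{Z}_n$, which has exactly $q$ elements since $n = 2q$. For two even residues $2a, 2b$ the circulant distance $|2a - 2b|_n$ is again an even number lying in $\{2, 4, \ldots, q-1\}$ (one has to check that because $q$ is odd, the distance $q$ itself never occurs between two even residues, and any even value $\le \lfloor n/2 \rfloor = q$ is in fact $\le q-1$); since all such values lie in $S$, $E$ is a clique, so $\omega(G) \ge q$, giving the contrapositive.

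For the reverse implication I would argue: assume $\omega(G) \ge q$, so there is a clique $F$ with $|F| \ge q$; since $G$ is non-complete, Lemma \ref{CN} gives $\omega(G) \le \lfloor n/2 \rfloor = q$, hence $|F| = q$ exactly. Now I want to deduce $\{2, 4, \ldots, q-1\} \subseteq S$. Following the counting idea already used in Lemma \ref{CN}: fix $r \in \{1, \ldots, q\}$ and consider the translate $F + r$. Since $|F| + |F+r| = 2q = n$, either $F$ and $F+r$ partition $\NZQ{Z}_n$ or they overlap; if they overlap in some vertex $v = w + r$ with $v, w \in F$, then $r \in S$. So the only way $r \notin S$ is that $F + r$ is exactly the complement $\NZQ{Z}_n \setminus F$. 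The key observation is then that if $F$ is disjoint from $F + r$, then $F$ is also disjoint from $F + r + r = F + 2r$ only if... — more precisely, $F + r = F^c$ forces $F + 2r = (F + r) + r = F^c + r = F$ (applying $F^c + r = (F+r)^c = (F^c)^c = F$ when $F+r = F^c$... wait, one needs $F + 2r = F$). So $r \notin S$ forces $F$ to be invariant under translation by $2r$. Since $q$ is odd, $2r$ together with $n = 2q$... if $2r$ generates a subgroup of $\NZQ{Z}_n$ of index $d$, then $F$ being a union of cosets of $\langle 2r \rangle$ of total size $q$ constrains things heavily; in particular $\gcd(2r, 2q) = 2\gcd(r,q)$ divides... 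I would push this to conclude that $r$ must be even, i.e. every odd $r \le q$ lies in $S$ — but the claim concerns even $r$, so I should recheck the parity bookkeeping and perhaps the roles of "even" and "odd" are swapped from what I first guessed.

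Let me restate the core of the reverse direction more carefully, since that is where the real content lies. With $|F| = q$ and $r \notin S$, we get $F + r = \NZQ{Z}_n \setminus F$, hence $F$ is a union of cosets of the subgroup $H_r := \langle 2r \rangle \le \NZQ{Z}_n$ with the property that translation by $r$ swaps $F$ with its complement; this is impossible when $[\NZQ{Z}_n : H_r]$ is odd (since then a $\pm$-invariant-under-$2r$ set summing to half of $n$ cannot exist — the complement-swapping by $r$ would force each coset-pair to contribute equally, needing an even number of $H_r$-cosets). Computing $|H_r| = n / \gcd(2r, n) = 2q/\gcd(2r, 2q) = q/\gcd(r,q)$, so the index is $2\gcd(r,q)$, which is always even — that can't be the obstruction. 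So the real obstruction must come from iterating: $r \notin S$ gives $F + 2r = F$, and then one checks $2r \in S$ would be contradicted or... Actually the cleanest route: if $r \notin S$ then $F + r = F^c$; if also $2r \le q$ then asking whether $2r \in S$ amounts to whether $F$ meets $F + 2r = F$, which it does (it equals $F$!), so $2r \in S$ automatically — \emph{provided} $2r$ is a genuine "allowed distance" i.e. $|2r|_n = 2r$, which holds when $2r \le q$. Hence: $r \notin S \implies 2r \in S$ whenever $2 \le 2r \le q-1$ (using $q$ odd to get $2r \ne q$). Running this with $r = 1$: either $1 \in S$ or $2 \in S$; in all cases I want to climb to get all of $\{2,4,\ldots,q-1\}$.

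The hard part will be organizing this iteration into a clean proof that $\{2, 4, \ldots, q-1\} \subseteq S$ — the implication "$r \notin S \implies 2r \in S$" alone does not immediately give that every even number up to $q-1$ is in $S$, so I expect to need a secondary argument (e.g. showing that if some even $2s \le q-1$ is \emph{not} in $S$ then, tracing $F + s = F^c$, one produces a size-$q$ clique structure incompatible with another even element, ultimately contradicting $q$ odd via a $2$-adic valuation argument on the orbit of $F$ under the relevant translations). I would also double-check the small/boundary cases $r$ near $q/2$ and the behavior when $2r = q$ (excluded by oddness of $q$) and when $2r > q$ so that $|2r|_n = 2q - 2r$ is odd — these parity/floor subtleties between "$r$", "$2r$", "$|2r|_n$", and $\lfloor n/2\rfloor = q$ are exactly the place where an off-by-one or a swapped "even/odd" would creep in, so that bookkeeping is the main thing I would be careful about.
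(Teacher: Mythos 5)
Your first direction is fine and is exactly the paper's: when $\{2,4,\ldots,q-1\}\subseteq S$ the even residues $\{0,2,\ldots,2q-2\}$ form a clique of size $q$ (the difference of two even residues has $|\cdot|_n$ even and, since $q$ is odd, at most $q-1$). The genuine gap is in the other direction, which is the whole content of the lemma, and you leave it open. From a clique $F$ with $|F|=q$ you correctly derive, for $r\notin S$ with $1\le r\le q-1$, that $F+r=F^{c}$, hence $F+2r=F$ and $|2r|_n\in S$; but the resulting implication ``$r\notin S\Rightarrow 2r\in S$'', iterated from $r=1$, cannot reach every even element of $\{2,\ldots,q-1\}$ (it says nothing about an even $s$ unless you already know something about $s/2$), and you explicitly postpone the ``secondary argument'' that would close this. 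Moreover the parity computation you do attempt targets the wrong invariant: the index of $\langle 2r\rangle$ in $\mathbb{Z}_{2q}$ is indeed always even, so no contradiction can come from it; the relevant quantity is the order of the missing \emph{even} difference itself.

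The gap can be closed inside your own framework: let $s\in\{2,4,\ldots,q-1\}$ and suppose $s\notin S$. Apply your swap argument directly to $s$ (not to some $r$ with $2r=s$): $F\cap(F+s)=\varnothing$ forces $F+s=F^{c}$, hence $F+ks$ equals $F$ for $k$ even and $F^{c}$ for $k$ odd. Writing $s=2s'$, the order of $s$ in $\mathbb{Z}_{2q}$ is $2q/\gcd(2s',2q)=q/\gcd(s',q)$, a divisor of $q$ and therefore odd, say $m$; then $F=F+ms=F^{c}$, which is absurd, so $s\in S$. This ``order of an even element divides $q$, hence is odd'' step is precisely the 2-adic fact you gestured at but never pinned down. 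For comparison, the paper avoids translating the clique altogether and proves $(\Leftarrow)$ directly: if some even $r\le q-1$ is not in $S$, partition $\mathbb{Z}_{2q}$ into the $q$ even and $q$ odd residues; since $r$ is even, translation by $r$ preserves each class, and a pigeonhole count shows any subset of a class with more than $\frac{q-1}{2}$ elements contains two vertices differing by $r$; hence every clique meets each class in at most $\frac{q-1}{2}$ vertices and has size at most $q-1<q$. Either route is valid, but as written your proposal stops short of a proof at exactly the decisive step.
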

\begin{proof}
$(\Rightarrow)$. By contraposition assume $ \lbrace 2,4,\ldots, q-1 \rbrace \subseteq S$. We observe that the set $\{0,2,4,\ldots ,2q-2\}$ is a clique of cardinality $q$. It negates the hypotesis.

\noindent $(\Leftarrow)$. Let $r \in \lbrace 2,4,\ldots, q-1 \rbrace$ be such that $r \notin S$. Let $C$ be the set of vertices of a clique of $G$. We claim $|C|<q$.
We partition the set of vertices $V(G)=\lbrace 0,1,2,\ldots ,2q-1 \rbrace$
in the two sets 
\[
 V_1=\lbrace 2k \ | \ k=0, \ldots ,q-1 \rbrace \mbox{ and } V_2=\lbrace 2k+1 \ | \ k=0,\ldots ,q-1 \rbrace . 
\]
We observe that $|V_1|=|V_2|=q$.

\noindent
\textit{Claim:} $V_{1}$ (respectively $V_{2}$) contains at most $\frac{q-1}{2}$ vertices such that for each pair $v,w\in V_{1}$ we have $|v-w|_{n}\neq r$.

\noindent
\textit{Proof of the Claim.} By contraposition, assume that we take a subset $V'$ of cardinality $\frac{q-1}{2}+1$ of $V_{1}$ with the desired property, say 
\[
 V'=\{v_{1},v_{2},\ldots,v_{\frac{q-1}{2}+1} \},
\]
and since $V' \subset V_{1}$, these are all even vertices. Now we take the set $V''=\{ v+r: v \in V' \}$. Since $r$ is even, $V'' \subset V_{1}$ and $|V'|=|V''|=\frac{q-1}{2}+1$. Since $|V'|+|V''|>q$, then $V' \cap V'' \neq \varnothing$, so there exist $v,w \in V'$ such that $w=v+r$; hence, the set $V'$ has not the desired property.  The claim follows.

\noindent Thus $|C \cap V_1|\leq \frac{q-1}{2}$ and $|C \cap V_2|\leq \frac{q-1}{2}$, so that $|C| < q$.
\end{proof}
\noindent We give a generalization of Lemma \ref{2q} in the following
\begin{Lemma}\label{dq1}
Let $n=rq$ for an odd $q > 1$. Let $G=C_{n}(S)$ be a non-complete circulant graph. Then: 
\begin{enumerate}
 \item[1)] If  $\Big\lbrace r,2r, \ldots , \frac{q-1}{2} r \Big\rbrace \nsubseteq S$  then  $\omega (G) \leq \frac{n-r}{2}$. 
\item[2)] If  $\omega (G) < q$  then  $\Big\lbrace r,2r, \ldots , \frac{q-1}{2} r  \Big\rbrace \nsubseteq S$.
 \end{enumerate}
\end{Lemma}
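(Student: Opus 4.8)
The plan is to mimic the structure of the proof of Lemma~\ref{2q}, which is the special case $r=2$, and to generalize both the construction for part~1) and the partitioning/counting argument for part~2).

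For part~1), I would argue by contraposition on the \emph{negation of the conclusion}: actually, the cleanest route is direct. Suppose $\{r,2r,\ldots,\tfrac{q-1}{2}r\}\subseteq S$; I claim $\omega(G)>\tfrac{n-r}{2}=\tfrac{(q-1)r}{2}+\tfrac{r}{2}$, in fact that there is a clique of size $\tfrac{(q-1)r}{2}+1$ or larger. The natural candidate is a union of arithmetic progressions with common difference $r$: for each residue class $a\in\{0,1,\ldots,r-1\}$ consider the set $A_a=\{a, a+r, a+2r,\ldots\}\cap V(G)$, which has $q$ elements since $q$ is odd and $n=rq$. Within a single class $A_a$, two elements differ by a multiple $jr$ of $r$ with $1\le j\le q-1$; one checks $|jr|_n$ equals $\min(j,q-j)\cdot r$, which lies in $\{r,2r,\ldots,\tfrac{q-1}{2}r\}\subseteq S$, so each $A_a$ is a clique of size $q$. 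But $q$ may exceed $\tfrac{n-r}{2}$ only when $r=1$; for $r\ge 2$ we have $q\le \tfrac{n-r}{2}$ is false in general, so this is not quite enough. I would instead take a \emph{half} of such a progression together with other classes, paralleling Lemma~\ref{2q} where one uses the single even class. Concretely the clique $\{0,r,2r,\ldots,(q-1)r\}$ already has $q$ vertices and is a clique; since $n=rq$ and $q\ge 3$, one has $q=\tfrac nr\le \tfrac n2$, and more precisely we must show $q>\tfrac{n-r}{2}$, i.e. $2q>n-r=q(r-1)$, i.e. $2>r-1$, i.e. $r<3$. So for $r=2$ this recovers Lemma~\ref{2q}, but for $r\ge 3$ the bound $\tfrac{n-r}{2}$ is weaker than $q$ and the single progression does not suffice. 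The fix is to enlarge: take the full progression in class $0$ plus, in each other class $a$, roughly half the elements avoiding difference $r$, but that reintroduces the difference $2r,3r,\dots$. I expect the intended construction is simply the clique on $\{0,r,2r,\ldots,(q-1)r\}\cup\{1\}$ or similar only when $1\in S$, so the honest approach is: the contrapositive of 1) is ``$\omega(G)>\tfrac{n-r}{2}\Rightarrow \{r,\ldots,\tfrac{q-1}{2}r\}\subseteq S$'', and one shows any clique that large must, by a pigeonhole on the classes mod $r$, contain two vertices differing by each $jr$, $1\le j\le\tfrac{q-1}{2}$, forcing those differences into $S$.

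For part~2), the argument is the direct generalization of the $(\Leftarrow)$ direction of Lemma~\ref{2q}, run contrapositively: assume $\{r,2r,\ldots,\tfrac{q-1}{2}r\}\subseteq S$ and deduce $\omega(G)\ge q$, which is exactly the construction just discussed — the set $\{0,r,2r,\ldots,(q-1)r\}$ is a clique of size $q$ because consecutive-type differences $jr$ reduce mod $n$ to elements of $\{r,\ldots,\tfrac{q-1}{2}r\}\subseteq S$. This is the clean half and should be a short paragraph.

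The main obstacle is part~1): getting the correct bound $\tfrac{n-r}{2}$ rather than $q$ out of the pigeonhole. I would partition $V(G)$ into the $r$ residue classes mod $r$, each of size $q$; fix $jr\notin S$ with $1\le j\le \tfrac{q-1}{2}$; and show by the same doubling trick as in the Claim inside Lemma~\ref{2q} (take $V'$ in one class with no two elements differing by $jr$, form $V''=V'+jr$ inside the same class since $r\mid jr$, and use $|V'|+|V''|>q$ to force an intersection) that each class contributes at most $\tfrac{q-1}{2}$ vertices to a clique. Summing over the $r$ classes gives $\omega(G)\le r\cdot\tfrac{q-1}{2}=\tfrac{(q-1)r}{2}=\tfrac{n-r}{2}$, as desired. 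The subtlety to watch is that $V''=\{v+jr\}$ stays inside the \emph{same} class mod $r$ (true since $jr\equiv 0\bmod r$) and inside $V(G)$ after reduction mod $n$, and that ``differ by $jr$'' is measured as $|\cdot|_n$; since $jr\le\tfrac{q-1}{2}r<\tfrac n2$ this is unambiguous and matches membership in $S$. With that, part~1) follows verbatim along the lines of Lemma~\ref{2q}, and part~2) is the immediate converse construction.
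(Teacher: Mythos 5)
Your settled argument is correct and is essentially the paper's own proof: for 1) you fix $jr\notin S$, partition $V(G)$ into the $r$ residue classes mod $r$ (each of size $q$), run the doubling trick of the Claim in Lemma \ref{2q} inside each class to get at most $\tfrac{q-1}{2}$ clique vertices per class, and sum to $r\cdot\tfrac{q-1}{2}=\tfrac{n-r}{2}$; for 2) you argue contrapositively via the explicit clique $\{0,r,2r,\ldots,(q-1)r\}$, whose pairwise distances $|jr|_n=\min(j,q-j)r$ lie in $\{r,\ldots,\tfrac{q-1}{2}r\}\subseteq S$, exactly as in Lemma \ref{2q} ($\Rightarrow$). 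The exploratory detour in your first paragraph (trying to prove 1) by constructing a large clique) is unnecessary and can be deleted, but the final proof stands.
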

\begin{proof}
$1)$ The proof follows the steps of ($\Leftarrow$) of Lemma \ref{2q}. We assume $jr \notin S$ for some $j$, $1 \leq j \leq \frac{q-1}{2}$.
In this case we consider the partitions $V_i$ of $V(G)$
\[
 V_i=\{rk +i \ | \ k=0,\ldots,q-1\}
\]
with $i=0,\ldots, r-1$.
Let $C$ be the set of vertices of a clique of $G$. By using similar arguments to the Claim inside the proof of Lemma \ref{2q}, we can choose at most $\frac{q-1}{2}$ vertices within each $V_i$ such that for each pair $v,w\in V_{i}$ we have $|v-w|_{n}\neq jr$. Hence for any $i\in\{0,\ldots, r-1\}$, it follows that $|C \cap V_i|\leq \frac{q-1}{2}$. Since $r \cdot( \frac{q-1}{2}) =\frac{n-r}{2}$, at the end we get $|C|\leq  \frac{n-r}{2}$.

\noindent $2)$ The same proof of Lemma \ref{2q} ($\Rightarrow$) holds.
\end{proof}

\begin{Remark}
We highlight that by plugging $r=2$ in $1)$ and $2)$ of Lemma \ref{dq1}, we obtain the two implications of Lemma \ref{2q}. It is the unique case of $n=rq$ such that $\frac{rq-r}{2}$, the bound in $1)$, is equal to $q-1$, the bound in $2)$.
\end{Remark}

For the sake of simplicity, in Proposition \ref{congruent2} and Example \ref{exa} we focus our attention on the clique complex of the graph.

\begin{Proposition}\label{congruent2}
Let $n=2p^k$ for an odd prime $p$, with $k > 0$, and let $G=C_{n}(S)$ be a circulant graph. If $f_{p^k-1}$, the number of cliques of cardinality $p^{k}$, is non-zero  then
\[
 f_{p^k-1} \equiv 2 \ mod  \ p.
\]
In particular, if one of the following condition holds
\begin{itemize}
\item[a)] $1 \notin S$,
\item[b)] $1 \in S$ and there exists $t \in \{ 1, \ldots ,p^k \} $ with $\gcd(t,2p)=1$ such that $t \notin S$,
\end{itemize}
then $f_{p^k -1}=2$.
\end{Proposition}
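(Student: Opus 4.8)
The plan is to exploit the translation action of $\mathbb{Z}_{2p^k}$ on the set of $p^k$-element cliques, together with the elementary combinatorics of independent sets on an even cycle.

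First, for the congruence. If $f_{p^k-1}=0$ there is nothing to prove, so assume $G$ has a clique of cardinality $p^k$; then $\omega(G)\geq p^k$. Combining this with Lemma \ref{CN} when $G$ is not complete (the complete case being trivial) gives $\omega(G)=p^k$, so, since $\omega(G)\not< p^k$, Lemma \ref{2q} yields $\{2,4,\dots,p^k-1\}\subseteq S$. In particular the set of even vertices $E:=2\mathbb{Z}_{2p^k}$ and the set of odd vertices $O:=1+2\mathbb{Z}_{2p^k}$ are both $p^k$-cliques of $G$. Now let the unique subgroup $H=\langle 2\rangle\cong\mathbb{Z}_{p^k}$ of $\mathbb{Z}_{2p^k}$ act by translation on the collection of $p^k$-cliques. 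Each orbit has size dividing $p^k$, hence size $1$ or a positive power of $p$; and a clique $C$ forms a singleton orbit exactly when $C+2=C$, i.e. when $C$ is $H$-invariant, i.e. (since $|C|=p^k=|H|$) when $C$ is a single coset of $H$, that is $C=E$ or $C=O$. Since both $E$ and $O$ are cliques, there are exactly two singleton orbits, while every other orbit has size $\equiv 0\pmod p$. Summing the orbit sizes gives $f_{p^k-1}\equiv 2\pmod p$.

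For the sharper statement, note that in case a) we may take $t=1$ (here $1\notin S$ and $\gcd(1,2p)=1$) and in case b) we take the given $t$; in either case $t\in\{1,\dots,p^k\}$, $t\notin S$ and $\gcd(t,2p)=1$, so also $\gcd(t,2p^k)=1$, because $t$ is odd and prime to $p$. Since $t\notin S$ the graph $G$ is not complete, and $f_{p^k-1}\neq 0$ by hypothesis, so, exactly as above, $E$ and $O$ are $p^k$-cliques and $f_{p^k-1}\geq 2$; it remains to see they are the only ones. The map $\psi\colon\mathbb{Z}_{2p^k}\to\mathbb{Z}_{2p^k}$, $\psi(j)=tj$, is a bijection. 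If $C$ is any $p^k$-clique, then $C$ contains no pair $\{x,x+t\}$ (such a pair would be a non-edge, since $|t|_{2p^k}=t\notin S$), so $\psi^{-1}(C)$ is a set of $p^k$ vertices of the $2p^k$-cycle no two of which are cyclically consecutive. But a set of $p^k$ pairwise non-consecutive vertices on the $2p^k$-cycle must be one of the two alternating sets: the $p^k$ gaps between consecutive chosen vertices are each $\geq 1$ and sum to $p^k$, hence all equal $1$. Thus $\psi^{-1}(C)\in\{E,O\}$, and since $\psi(E)=2t\mathbb{Z}_{2p^k}=E$ (because $\gcd(2t,2p^k)=2$) and $\psi(O)=t+2t\mathbb{Z}_{2p^k}=t+2\mathbb{Z}_{2p^k}=O$ (because $t$ is odd), we conclude $C\in\{E,O\}$. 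Hence $f_{p^k-1}=2$.

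The delicate point is the first reduction: the whole argument rests on the fact that as soon as a clique of the extremal size $p^k=\lfloor n/2\rfloor$ exists, the two alternating sets $E$ and $O$ are themselves cliques — this is exactly where Lemmas \ref{CN} and \ref{2q} enter, modulo the harmless separate treatment of the complete graph. Everything else is bookkeeping on orbit sizes and the elementary description of the maximum independent sets of an even cycle; the content of conditions a) and b) is simply that there is a \emph{primitive} non-edge difference $t\leq p^k$, i.e. one generating $\mathbb{Z}_{2p^k}$, which is what collapses every $p^k$-clique onto the cycle picture.
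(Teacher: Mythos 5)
Your proposal is correct, but it takes a genuinely different route from the paper's. For the congruence, the paper argues by cases: the complete graph via Lucas's theorem, the power cycle $S=\{1,\ldots,p^k-1\}$ via the factorization $(1+2x)^{p^k}$ of the independence polynomial together with Fermat's little theorem, and the remaining case via a hand-built translation argument showing that the cliques other than the two parity classes come in families of size $2sp$, so that $2p\mid f_{p^k-1}-2$. You replace all of this by a single orbit count for the translation action of the index-two subgroup $\langle 2\rangle\cong\mathbb{Z}_{p^k}$: after invoking Lemmas \ref{CN} and \ref{2q} (exactly as the paper does) to ensure that the two parity classes are cliques whenever $f_{p^k-1}\neq 0$, the only fixed cliques are these two classes and every other orbit has $p$-power size greater than $1$. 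This is shorter and uniform --- the complete case needs no separate treatment beyond noting that $\{2,4,\ldots,p^k-1\}\subseteq S$ holds trivially there (your parenthetical ``the complete case being trivial'' should be read this way, since $\omega(G)=p^k$ fails for the complete graph), and it even yields $\binom{2p^k}{p^k}\equiv 2\pmod p$ without Lucas; the only trade-off is that you obtain $p\mid f_{p^k-1}-2$ rather than the paper's sharper $2p\mid f_{p^k-1}-2$ in its last case, which is all the Proposition requires. For the equality $f_{p^k-1}=2$, the paper treats a) and b) by separate ad hoc arguments (an interval count of omitted vertices when $1\notin S$, and a parity argument with $V+t$ and a multiplicative inverse of $t$ when $1\in S$); you unify them through the automorphism $j\mapsto tj$ of $\mathbb{Z}_{2p^k}$ (legitimate since $t$ is odd and prime to $p$, hence a unit), which carries every $p^k$-clique to a $p^k$-element independent set of the cycle $C_{2p^k}(\{1\})$, forcing it to be one of the two alternating classes, each of which your map fixes setwise. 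Note that, like the paper, you read the ``in particular'' clause as still assuming $f_{p^k-1}\neq 0$ (this is needed: for instance $C_6(\{3\})$ satisfies a) but has $f_{p^k-1}=0$), so the statement you prove coincides with the one intended in the paper.
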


\begin{proof}
First suppose that the graph is complete. Since $f_{p^k-1}=\binom{2p^k}{p^k}$ and by Lucas's Theorem \cite{Lu}, we obtain
$$f_{p^k-1}= \binom{2p^k}{p^k} \equiv 2 \ mod \ p.$$
Now suppose that $G$ is not complete. By Lemmas \ref{CN} and \ref{2q} since $f_{p^k-1}\neq 0$, that is $\omega(G)=p^k$, we have that $\{2,4,\ldots , p^k-1 \}\subseteq S$. So $f_{p^k-1} \geq 2$. In fact the graph contains at least the two maximal cliques
\[
V_{1}=\{ 0,2,4,6, \ldots ,2p^k-2\} \ \ \mbox{and } \ V_{2}=\{ 1,3,5,7, \ldots ,2p^k-1\}.
\]
We observe that each clique of cardinality $p^k$ different from $V_1$ and $V_2$ has non-empty intersection with $V_1$ and $V_2$. \\
We first study the particular cases for which $f_{p^k -1}=2$. Suppose $1\not\in S$.
In a clique $V$ of $p^k$ vertices different from $V_{1}$ and $V_{2}$, we must have $p^k$ intervals between two consecutive vertices in $V$ containing at least 1 vertex not in $V$, except for one containing 2 vertices, otherwise $V$ could be identified with $V_{1}$ or $V_{2}$. It follows $|V(G) \setminus V| \geq p^k-1+2=p^k+1$, that yields $|V|<p^k$. It contradicts the assumption. 

Now suppose that $1 \in S$ and there exists $t$ odd and coprime with $p$, $3 \leq t < p^k$, such that $t \notin S$. We prove $f_{p^k-1} =2.$
By contraposition let $V$ be a clique of cardinality $p^k$ different from $V_{1}$ and $V_{2}$, containing $0$ and $1$. Let $V'=\{v+t\:\ v\in V\}$. 
If $V \cap V' \neq \varnothing$ there exist $v,w \in V$ such that $w=v+t$. It is impossible. If $V \cap V'= \varnothing$, then $V(G)= V \sqcup V'$. Since $(t,p)=1$ and $t$ odd, then $(t,n)=1$, hence there exists an odd $a \in \mathbb{Z}_{n}$, coprime with $n$, such that $at \equiv 1 \ mod \ n $. Since $0 \in V$ and $t \in V'$ by definition of $V'$ we have that $2t \in V$. In fact if $t+t=2t \in V'$, then $t \in V$, obtaining a contradiction.
It follows that 
\[
 2bt \in V \mbox{ and } (2b+1)t \in V' \ \ \mbox{for any } b.
\] 
The vertex $at$ lives in $V'$ since $a$ is odd and lives in $V$ since $at=1\in V$. It implies $t \in S$. It is false.\\
Hence a clique of $p^k$ vertices different from $V_{1}$ and $V_{2}$ cannot exist and $f_{p^k -1}=2$. 

\noindent We assume $f_{p^k -1} >2$. Then by the previous observations we have 
\[
\Big\{t: t  \mbox{ odd and } (t,p)=1 \Big\} \cup \Big\{2k: k=1, \ldots , \frac{p^k-1}{2} \Big\} \subseteq S.
\]
Now we distinguish two cases:
\begin{enumerate}
 \item[1)] $S$ is the ($p^k-1$)-th power cycle, namely $S=\{1,\ldots , p^k -1 \};$
 \item[2)] $S$ is not the ($p^k-1$)-th power cycle.
\end{enumerate}
\noindent
$1)$ In this case $f_{p^k-1}$ is the coefficient of the degree $p^k$ term of the independence polynomial of the graph $C_{n}(p^k)$. As pointed out after Definition 3.4 in \cite{BH0}, this polynomial is 
$$(1+2x)^{p^k}$$
hence $f_{p^k-1}=2^{p^k} \equiv_{p} 2$ by Fermat's Little Theorem.

\noindent
$2)$ If $S\neq \{1,\ldots , p^k -1 \}$, there exists an odd multiple of $p$, $mp$ for an odd $m$ with $mp < p^k$, such that $mp \notin S$. Let $m=q \cdot p^{r}$ for some odd $q$ with $\gcd(q,p)=1$ and $0 \leq r < k-1$. Let $V$ be a clique of cardinality $p^k$ different from $V_1$ and $V_2$. Let $V'=V+mp:=\{v+mp \ : v \in V\}.$ We have $V \cap V'=\varnothing$. Moreover if $v \in V$ then $v+mp \in V'$ and $v+2mp\in V$. In fact if $v+2mp \in V'$ then $mp \in S$ since $V'$ is a clique. The latter implies that $V=V+2mp=V+2qp^{r+1}$. Since $q$ is odd and coprime with $p$, it is coprime with $n$, hence it is invertible in $\mathbb{Z}_{n}$. Therefore, there exists $h\in \mathbb{Z}_{n} $ such that $qh \equiv_{n} 1$, and $2qhp^{r+1}\equiv_{n} 2p^{r+1}$. Since $V=V+2qp^{r+1}$, then $V=V+2p^{r+1}$.
Now we prove that if $j \in \mathbb{Z}_{n}$ is such that 
\[
V= V+j,
\]
then $2p \ | \  j$.
By contradiction assume $V=V+j$ and $2p  \nmid  j$. 
We write $j=2p^{r+1}a+b$ with $0 < b < 2p^{r+1}$ and $2p \nmid \ b$.
Since $V=V+j$, then $V=V+b$ and we have
\[
L=\{0,b,2b,\ldots (o(b)-1)b\}\subseteq V.
\]
where $o(b)$ is the order of $b$ in $(\mathbb{Z}_n,+)$.
We analyze $g=\gcd(b,2p^{r+1})$ to determine the order of $b$ in $\mathbb{Z}_{n}$. Since $2p \nmid b$ and $b< 2p^{r+1}$, $g$ could be either $1$, $2$, $p^{i}$ with $1 \leq i \leq r+1$. \\
If $g=1$, then $o(b)=n$ and $L=\mathbb{Z}_{n}$, but $|V|=\frac{n}{2}$. It is impossible.\\
If $g=2$, then $o(b)=p^{k}$, $L=V_{1}\subseteq V$ and $|V_{1}|=|V|$ hence $V=V_{1}$. It is a contradiction to the assumption $V\neq V_{1}$.\\
If $g=p^{i}$, then $V=V+p^{i}=V+qp^{r+1-i}p^{i}=V'$. It contradicts the fact $V \cap V'=\varnothing$.\\
Hence, if $2p \nmid  j $ then $V\neq V+j$. Let $s$ be the minimum positive integer such that $V=V+2sp$.
Since $V=V+2p^{r+1}$, it follows that  $s \leq p^r$, $2p \leq 2sp < 2p^k$, and
\[V,V+1,\ldots ,V+(2sp-1)\]
are $2sp$ different cliques of $G$ having cardinality $p^k$. Hence $2p$ divides $(f_{p^k-1}-2)$
and $$f_{p^k-1} \equiv_{p} 2.$$
The assertion follows.
\end{proof} 

\begin{Example}\label{exa}
We provide an example with $f_{p^k-1}>2$ and $S \neq  \{1,2,\ldots,p^k~-~1\}.$ We consider the graph $G=C_{50}( \{1,2,\ldots ,24\}\setminus \{5\})$, using the notation of the proof of Proposition \ref{congruent2}, $V=V+2mp=V+2\cdot 1\cdot 5=V+10$. The clique complex of $G$ has $32$ cliques of cardinality $25$. 
We fix a vertex $v$, for simplicity $0$, and we look at the sequence of vertices $0,1,\ldots, 9$. Moreover, the symbol $0$ denotes a vertex not in a clique, while the symbol $1$ refers to a vertex in a clique. We have that $V_{1}$ has fundamental pattern $1010101010$, while $V_{2}$ has fundamental pattern $0101010101$. Since $V=V+10$, each fundamental pattern is repeated $5$ times to cover all the vertices of the graph. For example,
\[
V_{1}=1010101010.1010101010.1010101010.1010101010.1010101010
\]
and it happens for all the other cliques.
The complex has $30$ further cliques of three kinds, namely there are three further different patterns in the sequence of vertices in or not in a clique. The other three fundamental patterns are $1111100000, \ 1110100010$ and $1101100100$.
Since the graph is circulant, for each of the last three patterns, there are $10$ different cliques corresponding to the $10$ possible shifts. For example, for the first sequence we will have \[
0111110000, \ 0011111000, \ldots , 1111000001. \] 
So the total number of cliques will be $3 \cdot 10 +2=32$. 
\end{Example}

\noindent Now we are able to prove 
\begin{Theorem}\label{Th2}
Let $p$ be an odd prime and let $G$ be a non-empty circulant graph on $n=2p^{k}$ vertices with $k>0$. Then $\widetilde{\chi}(\Delta (G)) \neq 0$.
\end{Theorem}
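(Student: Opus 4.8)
The plan is to split the argument according to whether the complement graph $\bar{G}$ admits a maximum clique of size exactly $p^k$ or not, and then, as in the proof of Theorem \ref{Th}, to reduce the non-vanishing of $\widetilde{\chi}(\Delta(G)) = \sum_{i=1}^{d}(-1)^{i-1}f_{i-1} - 1$ to a congruence modulo $p$. Recall that $\bar{G}$ is a circulant graph on $n = 2p^k$ vertices, it is non-complete (since $G$ is non-empty), and $d = \omega(\bar{G})$. By Lemma \ref{CN} we have $d \le \lfloor n/2 \rfloor = p^k$, so the only cardinality in the range $1 \le i \le d$ for which Lemma \ref{Div} does \emph{not} immediately force $p \mid f_{i-1}$ is $i = p^k$ itself: for $1 \le i < p^k$, the equation $i \cdot f_{i-1} = 2p^k \cdot f_{i-1,0}$ together with $\gcd(i, p^k) \neq 0$ in the relevant sense (more precisely $v_p(i) < k$) gives $p \mid f_{i-1}$.

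First I would handle the case $d < p^k$. Then every $f_{i-1}$ with $1 \le i \le d$ is a multiple of $p$, exactly as in Theorem \ref{Th}: writing $\widetilde{\chi}(\Delta(G)) = pr - 1$ with $r \in \ZZ$, primality of $p$ forces $\widetilde{\chi}(\Delta(G)) \neq 0$. Second, and this is the substantive case, suppose $d = p^k$, i.e. $\omega(\bar{G}) = p^k$. Here Proposition \ref{congruent2}, applied to $\bar{G}$ in place of the "$G$" of that statement, tells us that $f_{p^k - 1} \equiv 2 \pmod p$ (the hypothesis $f_{p^k-1} \neq 0$ being exactly $\omega(\bar{G}) = p^k$). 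For all other indices $1 \le i < p^k$ we still have $p \mid f_{i-1}$ by Lemma \ref{Div} as above. Hence modulo $p$,
\[
\widetilde{\chi}(\Delta(G)) \equiv (-1)^{p^k - 1} f_{p^k-1} - 1 \equiv 2(-1)^{p^k-1} - 1 \pmod p.
\]
Since $p$ is odd, $p^k$ is odd, so $(-1)^{p^k-1} = 1$ and $\widetilde{\chi}(\Delta(G)) \equiv 2 - 1 = 1 \pmod p$; in particular it is non-zero.

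The only delicate point is bookkeeping in the first step: one must check that for $1 \le i < p^k = n/2$ the prime $p$ genuinely divides $f_{i-1}$. From $i f_{i-1} = 2 p^k f_{i-1,0}$ one gets $f_{i-1} = 2 p^k f_{i-1,0}/i$; writing $i = p^a m$ with $\gcd(m,p)=1$ and $a \le k-1$ (since $i < p^k$), this is $f_{i-1} = 2 p^{k-a} f_{i-1,0}/m \in \ZZ$, and $k - a \ge 1$, so $p \mid f_{i-1}$. That settles everything, and I expect the main obstacle to be purely one of citing Proposition \ref{congruent2} correctly with $G$ replaced by $\bar{G}$ — noting that the two maximal cliques $V_1, V_2$ of even/odd vertices there are precisely the features that make $f_{p^k-1} \equiv 2$ rather than $\equiv 0 \pmod p$, which is exactly what breaks the cancellation and yields non-vanishing.
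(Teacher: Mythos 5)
Your proposal is correct and follows essentially the same route as the paper: Lemma \ref{Div} together with the bound $\omega(\bar G)\le p^k$ from Lemma \ref{CN} gives $p\mid f_{i-1}$ for $1\le i<p^k$, and Proposition \ref{congruent2} (applied to the clique complex of $\bar G$) handles the top term, so that $\widetilde{\chi}(\Delta(G))\equiv \pm 1 \pmod p$; your case split $d<p^k$ versus $d=p^k$ is just the paper's dichotomy ``$f_{p^k-1}=0$ or $f_{p^k-1}\equiv 2 \pmod p$'' in different clothing. The only cosmetic point is that the divisibility $p\mid f_{i-1}$ is cleanest via $p$-adic valuation of $i\,f_{i-1}=2p^k f_{i-1,0}$ (i.e.\ $v_p(f_{i-1})\ge k-v_p(i)\ge 1$), rather than dividing by $m$ inside the fraction, but this is exactly the argument you intend and it is sound.
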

\begin{proof}
By using similar arguments to Theorem \ref{Th} we say that 
\[
p  \ | \ f_{i-1} \ \ \mbox{with }   1 \leq i \leq p^k -1 
\]
So we write
$$\widetilde{\chi}(\Delta (G))= pt+f_{p^k-1}-1 \ 	\ \ \mbox{for some } t \mbox{ in } \mathbb{Z}.$$
Since by Proposition \ref{congruent2} $f_{p^k-1}$ is $0$ or it is congruent $2$ modulo $p$, we have
$$\widetilde{\chi}(\Delta(G))=pr \pm \ 1$$
for some $r$ in $\mathbb{Z}$.
That is $\widetilde{\chi}(\Delta (G))$ does not vanish.
\end{proof} 

\begin{Example}\label{counterexample}
In the proofs of Theorems \ref{Th} and \ref{Th2} we are giving a partial positive answer to the Conjecture 5.38 of \cite{Ho} stating that all non-empty circulant graphs $G$ have $\widetilde{\chi}(\Delta(G))\neq 0$. 
But with a \texttt{\emph{MAGMA}} algorithm, available at 
\begin{center}
\texttt{http://www.giancarlorinaldo.it/eulercirculants.html},
\end{center}
we have found for $n=30$ and $n=36$ a list of circulants, up to isomorphisms, that contradict the Conjecture (see Table \ref{tab}).
Among those, for example, we report the circulant $G=C_{30}(\{1,3,8\})$ whose independence complex has $f$-vector equals to
$ [ 1, 30, 345, 1990, 6360, 11736, 12600, 7680, 2430, 300 ]$.
That is $$\widetilde{\chi}(\Delta (G))=-1+30-345+1990-6360+11736-12600+7680-2430+300=0.$$
\end{Example}
\begin{table}[h]
\centering
\begin{small}
\begin{tabular}{| ccc |}
\hline
  &$n=30$  &  \\
\hline
$\{ 1, 3, 8 \}$ & $\{ 1, 7, 9, 11, 14 \}$& $\{ 1, 2, 3, 7, 9, 11, 13 \}$\\
$\{ 2, 9, 13 \}$  &  $\{ 1, 4, 9, 13, 14 \}$ &$\{ 2, 3, 4, 5, 7, 9, 14 \}$\\
  $\{ 8, 9, 13 \}$ & $\{ 2, 3, 7, 8, 9 \}$ &$\{ 2, 3, 4, 5, 8, 9, 14 \}$\\
  $\{ 1, 8, 9, 14 \}$ & $\{ 1, 3, 4, 9, 11  \}$& $\{ 1, 3, 4, 5, 7, 8, 14 \}$\\
    $\{ 2, 3, 11, 13 \}$ & $\{ 2, 7, 8, 9, 13  \}$ &$\{ 2, 3, 4, 5, 8, 11, 13 \}$\\
    $\{ 3, 8, 11, 13 \}$ & $\{ 2, 3, 4, 7, 13  \}$ &$\{ 1, 2, 3, 7, 8, 9, 11, 13 \}$\\
    $\{ 1, 3, 4, 13 \}$ & $\{ 1, 3, 4, 5, 7, 8  \}$ &$\{ 2, 3, 5, 8, 9, 11, 13, 14 \}$\\
    $\{ 7, 8, 9, 13 \}$ &$\{ 2, 3, 4, 5, 8, 11  \}$&$\{ 1, 2, 3, 4, 5, 8, 9, 14 \}$\\
    $\{ 1, 4, 7, 9 \}$ &$\{ 1, 2, 3, 8, 9, 11  \}$ &$\{ 1, 2, 3, 5, 7, 9, 11, 14 \}$\\
    $\{ 1, 8, 9, 11 \}$ &$\{ 1, 3, 4, 7, 9, 13  \}$ &$\{ 2, 3, 4, 5, 7, 9, 13, 14 \}$\\
    $\{ 2, 9, 11, 14 \}$ &$\{ 1, 4, 7, 9, 11, 14  \}$ &$\{ 1, 3, 4, 5, 7, 8, 9, 11, 13 \}$\\
    $\{ 1, 2, 9, 13 \}$ &$\{ 1, 2, 3, 5, 11, 14  \}$ &$\{ 1, 2, 3, 4, 5, 8, 9, 11, 13 \}$\\
    $\{ 2, 3, 7, 9 \}$&$\{ 1, 3, 4, 9, 11, 14  \}$ &$\{ 2, 3, 4, 5, 7, 8, 9, 13, 14 \}$\\
    $\{ 1, 7, 8, 9, 11 \}$ &$\{ 2, 3, 4, 7, 8, 13  \}$ &$\{ 1, 2, 3, 4, 5, 7, 9, 11, 13, 14 \}$\\
    $\{ 1, 3, 7, 8, 13 \}$ &$\{ 1, 2, 5, 7, 9, 13, 14  \}$  & \\
    $\{ 2, 3, 4, 7, 8 \}$ &$ \{ 1, 4, 5, 7, 8, 9, 11  \}$  & \\
    \hline 
    &$n=36$ & \\
    \hline
$\{ 2, 3, 6, 7, 10, 14, 15 \}$ &$\{ 2, 5, 6, 7, 10, 11, 14 \}$  &$\{ 2, 5, 6, 10, 11, 13, 14 \}$ \\
$\{ 1, 2, 5, 6, 7, 10, 11, 17 \}$ &$\{ 1, 5, 6, 7, 11, 13, 14, 17\}$ &$\{ 2, 5, 6, 7, 10, 14, 15, 17 \}$ \\
 $\{ 1, 2, 5, 6, 7, 10, 11, 13 \}$ &$\{ 1, 5, 6, 7, 10, 11, 13, 14, 17 \}$ & \\
\hline
\end{tabular}
\end{small}
\caption{\fontsize{9}{15}\selectfont The table shows $G=C_{n}(S)$ such that $\widetilde{\chi}(\Delta(G))=0$, up to isomorphisms.}\label{tab}
\end{table}

We now give some applications.
\noindent The structure and roots of the independence polynomial have been studied by Hoshino and Brown (see \cite{BHN}, \cite{Ho}, \cite{BH0}). By Theorem \ref{Th} and Theorem \ref{Th2}, we obtain the following
\begin{Corollary}
Let $n\in \{p^k, 2p^k\}$ for a prime $p$ and for $k > 0$, and let $G$ be a non-empty circulant graph on $n$ vertices. Then
\[
 I(G,-1)\neq 0.
\]
\end{Corollary}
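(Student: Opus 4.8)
The plan is to deduce the Corollary directly from Theorems \ref{Th} and \ref{Th2} via the identity \eqref{charind}, which asserts $\widetilde{\chi}(\Delta(G)) = -I(G,-1)$. Since $n \in \{p^k, 2p^k\}$, exactly one of the two theorems applies: if $p$ is any prime and $n = p^k$ we invoke Theorem \ref{Th}, while if $p$ is an odd prime and $n = 2p^k$ we invoke Theorem \ref{Th2}. In either case we obtain $\widetilde{\chi}(\Delta(G)) \neq 0$, and then \eqref{charind} forces $I(G,-1) = -\widetilde{\chi}(\Delta(G)) \neq 0$.

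The only subtlety worth addressing is the overlap and coverage of the two cases. When $p = 2$, the value $n = 2p^k = 2^{k+1}$ is already a prime power, so it is covered by Theorem \ref{Th} directly and there is no need for Theorem \ref{Th2} (which in any event is stated only for odd $p$); I would note this parenthetically so the reader sees the hypotheses of the invoked theorem are genuinely met. When $n = p^k$ with $p$ odd, Theorem \ref{Th} applies. When $n = 2p^k$ with $p$ odd and $k > 0$, and $n$ is not itself a prime power, Theorem \ref{Th2} applies. Thus every $n$ in the stated set is handled, and non-emptiness of $G$ — the hypothesis shared by both theorems — is exactly the hypothesis of the Corollary.

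I would therefore write the proof as: \emph{By Equation \eqref{charind}, $I(G,-1) = -\widetilde{\chi}(\Delta(G))$. If $n = p^k$, then $\widetilde{\chi}(\Delta(G)) \neq 0$ by Theorem \ref{Th}; if $n = 2p^k$ with $p$ odd, then $\widetilde{\chi}(\Delta(G)) \neq 0$ by Theorem \ref{Th2} (and if $p = 2$ this case is already subsumed by the previous one). In either case $I(G,-1) \neq 0$.} There is essentially no obstacle here: the content lies entirely in Theorems \ref{Th} and \ref{Th2}, and the Corollary is a one-line translation through the elementary identity \eqref{charind} between the reduced Euler characteristic and the evaluation of the independence polynomial at $-1$. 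The main (trivial) point to be careful about is simply making sure the case division is exhaustive and that the hypotheses of each cited theorem — in particular the parity of $p$ in Theorem \ref{Th2} — are respected.
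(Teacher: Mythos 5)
Your proposal is correct and is essentially the paper's own argument: the Corollary is stated there as an immediate consequence of Theorems \ref{Th} and \ref{Th2} together with Equation \eqref{charind}. Your extra remark that the case $p=2$, $n=2^{k+1}$ is already covered by Theorem \ref{Th} is a sensible clarification but does not change the route.
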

\noindent By Example \ref{counterexample} and Equation \eqref{charind}, $-1$ is a root of the independence polynomial of the circulant graph $C_{30}(1,3,8)$.

\noindent Similar results follow by Remark \ref{remri} for the regularity index and the $a$-invariant. Moreover by using Corollary 4.8 of \cite{Ei}, we get the following
\begin{Corollary}
Let $G$ be a circulant graph as in Theorem \ref{Th} and Theorem \ref{Th2}. If $G$ is Cohen-Macaulay then
\[
\depth R/I(G)= \reg R/I(G).
\]
\end{Corollary}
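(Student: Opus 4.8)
The plan is to obtain the equality by combining the non-vanishing of $\widetilde{\chi}(\Delta(G))$ proved in Theorems \ref{Th} and \ref{Th2} with the comparison, valid for Cohen--Macaulay rings, between the regularity index and the Castelnuovo--Mumford regularity. Throughout set $\Delta=\Delta(G)$ and let $d=\dim\Delta+1$; by the discussion in Section \ref{sec:pre} this is the Krull dimension of the Stanley--Reisner ring $R/I_\Delta=R/I(G)$, hence also $d=\dim R/I(G)$.

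First I would note that, depending on whether $n=p^k$ or $n=2p^k$, Theorem \ref{Th} or Theorem \ref{Th2} applies and gives $\widetilde{\chi}(\Delta)\neq 0$. Since $h_d=(-1)^{d-1}\widetilde{\chi}(\Delta)$, where $(h_0,\dots,h_d)$ is the $h$-vector of $\Delta$, this is equivalent to $h_d\neq 0$. Applying Remark \ref{remri} to $R/I_\Delta=R/I(G)$ then yields $\ri(R/I(G))=1$. (Equivalently: $h_d\neq 0$ means that the $h$-polynomial $\sum_{i=0}^{d}h_i t^i$ has degree exactly $d$, so the $a$-invariant of $R/I(G)$, i.e.\ the degree of $\HP_{R/I(G)}(t)$ as a rational function, equals $0$.)

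Now assume $G$ is Cohen--Macaulay, that is, $R/I(G)$ is a Cohen--Macaulay standard graded $K$-algebra. Then $\depth R/I(G)=\dim R/I(G)=d$. To compute the regularity I would invoke Corollary 4.8 of \cite{Ei}, which for a Cohen--Macaulay quotient $R/I$ expresses the Castelnuovo--Mumford regularity in terms of the regularity index as $\reg(R/I)=\ri(R/I)+\dim(R/I)-1$ (equivalently $\reg(R/I)=a(R/I)+\dim(R/I)$, which in our situation is $0+d$). Substituting $\ri(R/I(G))=1$ and $\dim R/I(G)=d$ gives $\reg R/I(G)=1+d-1=d$, so that $\reg R/I(G)=d=\depth R/I(G)$.

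The argument is short once Theorems \ref{Th} and \ref{Th2} are in hand; the only point demanding some care is quoting the Cohen--Macaulay regularity/regularity-index comparison in exactly the form used above and checking that the case at hand ($\ri=1$, and $d\geq 1$ since $\Delta$ contains every vertex of $G$) stays clear of the range in which the truncation to $\NN$ in the definition of the regularity index would interfere. I expect this minor bookkeeping to be the main obstacle.
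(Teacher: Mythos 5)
Your proposal is correct and follows exactly the route the paper intends: Theorems \ref{Th} and \ref{Th2} give $\widetilde{\chi}(\Delta(G))\neq 0$, hence $h_d\neq 0$ and $\ri(R/I(G))=1$ (equivalently $a$-invariant $0$) by Remark \ref{remri}, and then Eisenbud's Corollary 4.8 for the Cohen--Macaulay case gives $\reg R/I(G)=\dim R/I(G)=\depth R/I(G)$. The paper states the corollary without a written proof, and your write-up (including the observation that $\ri=1$ avoids any issue with the truncation in the definition of the regularity index) is the natural expansion of that argument.
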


It is of interest to find the hypothesis, also of different type, for the non-vanishing of the reduced Euler characteristic of circulant graphs.

We focused on the property of $n$, but nice combinatorial properties like well-coveredness (see \cite{Ho}, \cite{EMT}),  strongly connectedness (see \cite{Ri}), vertex decomposability and shellability (see \cite{MTW}) could be helpful.
From another point of view, it would be nice to find entire classes of circulants that for particular $n$ and $S$ have vanishing Euler characteristic, by using a theoretical approach rather than the computational one used in Example \ref{counterexample}.

\end{document}